\tikzset{node distance=2em, ch/.style={circle,draw,on chain,inner sep=2pt},chj/.style={ch,join},every path/.style={shorten >=4pt,shorten <=4pt},line width=1pt,baseline=-1ex}
\newtheorem{corollary}{Corollary}
\newtheorem*{corollary*}{Korollar}
\newtheorem*{notation*}{Notation}
\newtheorem*{dank*}{Danksagung}
\newtheorem{lemma}{Lemma}
\newtheorem*{example*}{Example}
\newtheorem{theorem}{Theorem}
\theoremstyle{remark}
\newtheorem{remark}{Remark}
\theoremstyle{theorema}
\newtheorem{theoremb}{Conjecture}
\theoremstyle{definition}
\newtheorem{definition}{Definition}
\DeclareMathOperator{\codim}{codim}
\title[On simplicial arrangements in $\mathbb{P}^3(\mathbb{R})$ with splitting polynomial]
{On simplicial arrangements in $\mathbb{P}^3(\mathbb{R})$ with splitting polynomial}
\author{David~Geis}
\email{davidgeis@web.de}
\begin{document}
\begin{abstract}
In this paper, we study simplicial hyperplane arrangements in real projective $3$-space. We give a necessary condition for the characteristic polynomial to have only real roots, valid also for non-simplicial arrangements. As application, we obtain combinatorial inequalities which are satisfied for arrangements with splitting polynomial. This allows us to prove that there are only finitely many different isomorphism classes of simply laced simplicial arrangements whose characteristic polynomials split over $\mathbb{R}$. We also provide an updated version of a catalogue published by Gr\"unbaum and Shephard and review some conjectures of theirs.
\end{abstract}


\maketitle

\begin{section}{Introduction}
In this note, we are interested in simplicial hyperplane arrangements in $\mathbb{P}^d(\mathbb{R})$. An arrangement is called simplicial if it divides the ambient space into open simplicial cones. The most prominent examples are probably provided by the reflection arrangements associated to finite real reflection groups. For $d=2$, there exist two infinite series of arrangements (see \cite{Gruenbaum}), but for $d \geq 3$, it is widely believed that there are only finitely many different isomorphism classes of \textit{irreducible} simplicial hyperplane arrangements in $\mathbb{P}^d(\mathbb{R})$ (the term ``irreducible'' will be explained in the following section). Moreover, for $d \in \lbrace 2,3 \rbrace$, Gr\"unbaum and Shephard gave catalogues in the papers \cite{Gruenbaum} and \cite{Gruenbaum_giessen}, which contained all such examples that were known at the time. In the papers \cite{Cuntz27}, \cite{Cuntz3space} and \cite{Cuntz4space}, several additional examples were discovered; in fact, for each $d \geq 2$ a complete subclass (so called \textit{crystallographic} arrangements) of simplicial arrangements in $\mathbb{P}^d(\mathbb{R})$ was classified. For fixed $d \geq 2$, each subclass is finite. In the paper \cite{cuntz_muecksch}, another classification result was given: for $d \geq 2$, all \textit{supersolvable} simplicial arrangements in $\mathbb{P}^d(\mathbb{R})$ were determined. Again, for fixed $d \geq 3$(!), each subclass is finite. Thus, there exists quite some evidence for the above mentioned belief. The aim of this paper is to provide further evidence: we fix $d:=3$ and show that there are only finitely many different isomorphism classes of \textit{simply laced} simplicial arrangements in $\mathbb{P}^3(\mathbb{R})$ with splitting characteristic polynomial (see Theorem \ref{simply laced thm}). This is achieved by providing inequalities involving the $h$-vector, $t$-vector and $f$-vector of an arrangement (see  for instance Theorem \ref{real roots rank 4}). We also review another conjecture made in \cite{Gruenbaum_giessen} (see Conjecture 2) and give an updated catalogue of simplicial arrangements in $\mathbb{P}^3(\mathbb{R})$. \\

The paper is organized as follows: in Section 2, we will first recall all needed definitions and concepts in the general case. We then fix $d=3$ and recall two conjectures made in \cite{Gruenbaum_giessen}. In the following Section 3, we formulate and prove our main results. Section 4 contains an updated catalogue of simplicial arrangements in $\mathbb{P}^3(\mathbb{R})$. Finally, the appendix contains normal vectors for two simplicial $3$-arrangements, which seem to be hard to find explicitly in the literature.   
\end{section}

\begin{section}{Definitions and setup}
In this section, we first introduce \textit{real simplicial projective hyperplane arrangements}, i.e. arrangements of hyperplanes in $\mathbb{P}^{d}(\mathbb{R})$ which induce a decomposition of the ambient space into simplicial cones (see Definition 1). After that, we shift focus towards the case $d=3$, where we will be especially interested in arrangements whose characteristic polynomials split over $\mathbb{R}$ (for instance \textit{free} arrangements); we also discuss some conjectures by Grünbaum and Shephard, before moving on to the next section. \\

We begin by introducing the objects of interest: hyperplane arrangements in projective spaces and some basic associated combinatorial concepts.
\begin{definition}
i) Let $d \geq 2$ be an integer. Write $\mathbb{P}^{d}:=\mathbb{P}^{d}(\mathbb{R})$ and let $\pi:\mathbb{R}^{d+1}\setminus \lbrace 0 \rbrace \longrightarrow \mathbb{P}^{d}$ be the natural map. If $H=\pi(H^\prime \setminus \lbrace 0 \rbrace)$ for some $H^\prime \leq \mathbb{R}^{d+1}$ such that $\codim(H^\prime)=1$, then $H \subset \mathbb{P}^{d}$ is called a \textit{projective hyperplane}. Let $\mathcal{A}:=\lbrace H_1, ..., H_n \rbrace$ be a finite set of projective hyperplanes. If $\bigcap_{i=1}^n H_i=\lbrace \rbrace$, then $\mathcal{A}$ is called an \textit{arrangement of hyperplanes}, or simply \textit{arrangement} for short.\\
ii) Each arrangement $\mathcal{A}$ induces a cell decomposition $\gamma$ of $\mathbb{P}^{d}$ and two arrangements $\mathcal{A}_1, \mathcal{A}_2$ with corresponding cell decompositions $\gamma_1, \gamma_2$ are called \textit{isomorphic}, if $\gamma_1$ and $\gamma_2$  are isomorphic. Denote by $f^\mathcal{A}_i$ the number of $i$-dimensional cells of $\gamma$ and define $f^\mathcal{A}:=\left(f^\mathcal{A}_0, f^\mathcal{A}_1, ..., f^\mathcal{A}_{d}\right)$, the so called $f$-\textit{vector of} $\mathcal{A}$. The zero-dimensional elements of $\gamma$ are called \textit{vertices} (of $\mathcal{A}$), the one-dimensional elements are called \textit{edges} (of $\mathcal{A}$) and the $d$-dimensional elements are called \textit{chambers} (of $\mathcal{A}$). The set of chambers of $\mathcal{A}$ is denoted by $\mathcal{K}(\mathcal{A})$. Each chamber $C \in \mathcal{K}(\mathcal{A})$ is bounded by at least $d+1$ hyperplanes of $\mathcal{A}$, called the \textit{walls} of $C$. An arrangement is called \textit{simplicial}, if every chamber has precisely $d+1$ walls.\\
iii) To each chamber $C$ we associate a graph $\Gamma^C$ in the following way: the vertices of $\Gamma^C$ are given by the walls of $C$. Two vertices $H, H^\prime$ of $\Gamma^C$ are connected by an edge with weight $|\lbrace k \mid (H \cap H^\prime) \subset H_k \rbrace|$ if said quantity is at least three. The graph $\Gamma^C$ is called the \textit{Coxeter diagram at} $C$. The arrangement $\mathcal{A}$ is called \textit{simply laced} if for any $C \in \mathcal{K}(\mathcal{A})$, the graph $\Gamma^C$ has only edges of weight three.  \\
iv) There is a poset $L:=L(\mathcal{A})$ associated to $\mathcal{A}$, which is defined as follows: let $\mathcal{A}^\prime$ be the set of linear hyperplanes in $\mathbb{R}^{d+1}$ corresponding to $\mathcal{A}$ and let $L$ consist of all subspaces of $\mathbb{R}^{d+1}$ which are obtained as intersections of elements of $\mathcal{A}^\prime$. The \textit{rank} of an element of $L$ is given by its codimension in $\mathbb{R}^{d+1}$.  Ordering $L$ by reverse inclusion makes it into a lattice, the so called \textit{intersection lattice of} $\mathcal{A}$. Denote the M\"obius function of $L$ by $\mu$. Then the \textit{characteristic polynomial} of $\mathcal{A}$ is defined by $\chi(\mathcal{A},t):=\sum_{X \in L} \mu(\mathbb{R}^{d+1},X) t^{\dim(X)}$.\\
v) Elements of $L$ having dimension two are called \textit{lines} of $\mathcal{A}$. Elements of $L$ having dimension one are the same as zero-dimensional elements of $\gamma$ and are also called \textit{vertices} accordingly. Each vertex $v$ is contained in at most $n-1$ hyperplanes of $\mathcal{A}$ and we define $w(v):=|\lbrace i \mid v \in H_i \rbrace|$. The number $w(v)$ is called the \textit{weight} of $v$. The subarrangement of $\mathcal{A}$ consisting of all planes containing $v$ induces a hyperplane arrangement in $\mathbb{P}^{d-1}$ of size $w(v)$, called the \textit{parabolic subarrangement at} $v$.
Similarly, if $l$ is a line, then we define $w(l):=|\lbrace i \mid l \subset H_i \rbrace|$ and call it the \textit{weight} of $l$. For each $i \geq 2$ we denote by $h^\mathcal{A}_i$ the number of lines having weight $i$. Similarly, for each $j \geq 3$ we denote by $t^\mathcal{A}_j$ the number of vertices having weight $j$.
The vector $h^\mathcal{A}:=(h^\mathcal{A}_2, h^\mathcal{A}_3, ...)$ is called the $h$-vector of $\mathcal{A}$ while the vector $t^\mathcal{A}:=(t^\mathcal{A}_3, t^\mathcal{A}_4, ...)$ is called the $t$-vector of $\mathcal{A}$. For fixed $H \in \mathcal{A}$, the set of elements in $L$ which have codimension two and which are contained in $H$ define a hyperplane arrangement in $\mathbb{P}^{d-1}$, called the \textit{restriction of $\mathcal{A}$ to $H$} and denoted by $\mathcal{A}^H$. Finally, we write $m(\mathcal{A})$ for the maximal $j$ such that $t^\mathcal{A}_j>0$ and call it the \textit{multiplicity} of $\mathcal{A}$.
\end{definition}

From now on, we fix $d:=3$, i.e. we study projective hyperplane arrangements in $\mathbb{P}^3$. In particular, the arrangement $\mathcal{A}$ is simply laced precisely when $h^\mathcal{A}_i=0$ for $i\geq4$. If $\mathcal{A}$ is simplicial, then every chamber of $\mathcal{A}$ has exactly four walls, six edges and four vertices. Moreover, for every vertex $v$, the corresponding parabolic subarrangement at $v$ is a simplicial arrangement in $\mathbb{P}^2$ (see for instance Lemma 2.17 in \cite{cuntz_muecksch}). By the same lemma, for every $H \in \mathcal{A}$, the restricted arrangement $\mathcal{A}^H$ is a simplicial arrangement in $\mathbb{P}^2$. In the paper \cite{Gruenbaum_giessen}, it is conjectured that there are only finitely many different isomorphism classes of \textit{irreducible} simplicial arrangements in $\mathbb{P}^3$. Here, the term irreducible means the following: an arrangement $\mathcal{A}$ in $\mathbb{P}^{d}$ is called \textit{reducible}, if there exist $d_1, d_2 \in \mathbb{Z}_{\geq 0}$ and arrangements $\mathcal{A}_1, \mathcal{A}_2$ in $\mathbb{P}^{d_1},\mathbb{P}^{d_2}$ such that $d_1+d_2+1=d$ and $\mathcal{A}=\mathcal{A}_1 \times \mathcal{A}_2$ is a product arrangement (see Chapter 2 in \cite{hyperhyper} for more on this construction). Now, $\mathcal{A}$ is called irreducible if it is not reducible. Observe that $\mathcal{A}$ is irreducible if and only if $\Gamma^C$ is connected for every $C \in \mathcal{K}(\mathcal{A})$ (see Lemma 3.5 in \cite{cuntz_muecksch}). We formulate the following conjecture:

\begin{theoremb} \label{endlich conje}
There are only finitely many different isomorphism classes of irreducible simplicial arrangements in $\mathbb{P}^3$.
\end{theoremb}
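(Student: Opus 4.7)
The plan is to produce a uniform bound $n=|\mathcal{A}|\le N_0$ for every irreducible simplicial $\mathcal{A}$ in $\mathbb{P}^3$; since only finitely many combinatorial types of arrangements exist with bounded $n$ and $d$, Conjecture \ref{endlich conje} then follows. The bound will be extracted from inequalities coupling the $f$-, $t$-, and $h$-vectors of $\mathcal{A}$.

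The first step is to exploit locality. For every vertex $v$ the parabolic subarrangement is a simplicial arrangement in $\mathbb{P}^2$, and for every $H\in\mathcal{A}$ the restriction $\mathcal{A}^H$ is simplicial in $\mathbb{P}^2$. Simplicial rank-$3$ arrangements obey classical Melchior-type inequalities and are drawn from a short, nearly complete Gr\"unbaum catalogue, which translates into pointwise upper bounds on the weights $w(v)$ and $w(l)$. Summing these local constraints via incidence counts between vertices, lines, hyperplanes, and chambers yields a first system of linear relations in $f^{\mathcal{A}}$, $t^{\mathcal{A}}$, and $h^{\mathcal{A}}$.

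The second step is to invoke Theorem \ref{real roots rank 4}: when $\chi(\mathcal{A},t)$ splits over $\mathbb{R}$, its elementary symmetric functions, which are M\"obius sums over $L(\mathcal{A})$, obey Newton-type inequalities that become highly nontrivial polynomial inequalities in $n$ and in the $h$- and $t$-entries. Coupling them with the local relations of the previous step is the mechanism by which Theorem \ref{simply laced thm} bounds $n$ in the simply laced splitting case. To reach Conjecture \ref{endlich conje}, I would extend this in two directions. For non-simply-laced diagrams I would add a bound on the Coxeter edge weights, using the fact that large weights at a codimension-$2$ flat force too many chambers around that flat to fit inside a $4$-walled simplicial cone in $\mathbb{P}^3$; this enlarges the inequality system to include the $h^\mathcal{A}_i$ with $i\ge 4$ under controlled contributions.

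The main obstacle is the splitting hypothesis itself. For the full conjecture one must \emph{prove} that $\chi(\mathcal{A},t)$ factors over $\mathbb{R}$ for every irreducible simplicial arrangement in $\mathbb{P}^3$, which is essentially the simplicial case of Terao's freeness conjecture in rank $4$. A natural attack is to build $\chi(\mathcal{A},t)$ by Brieskorn/Orlik--Solomon-type recursions from the restrictions $\chi(\mathcal{A}^H,t)$, but local splittings do not automatically glue to a global one. Overcoming this gap would almost certainly require a new combinatorial invariant of simplicial $3$-arrangements \textemdash{} a candidate exponent sequence computable from $t^{\mathcal{A}}$ and $h^{\mathcal{A}}$ \textemdash{} whose construction is, in my view, where the real work lies.
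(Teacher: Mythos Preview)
The statement you are trying to prove is labelled \texttt{theoremb} in the paper, which is the environment for \emph{conjectures}. There is no proof of it in the paper; the paper only establishes the special case of simply laced arrangements with splitting polynomial (Theorem~\ref{simply laced thm}, part~ii)). So there is nothing to compare your attempt against, and your write-up should be read as a research outline rather than a proof.

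As such an outline it contains, beyond the gap you yourself flag, a second and more immediate one. Your first step asserts that because parabolic subarrangements are simplicial in $\mathbb{P}^2$ and ``are drawn from a short, nearly complete Gr\"unbaum catalogue'', one obtains uniform pointwise bounds on the vertex weights $w(v)$. This is false: the rank-$3$ simplicial catalogue contains two \emph{infinite} families (see the introduction and \cite{Gruenbaum}), so no a priori bound on $|\mathcal{A}_v|$ follows. In the paper this obstacle is overcome only under the simply laced hypothesis (Lemma~\ref{parabol simply laced}), which forces every parabolic to be simply laced in $\mathbb{P}^2$ and hence of size at most $7$; without that hypothesis the multiplicity $m(\mathcal{A})$ can be large, as already witnessed by $\mathcal{A}(H_4)$ in Table~1, which has a vertex of weight~$15$. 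Consequently the edge-counting argument behind inequality~(6) and Corollary~\ref{f_3 h abschaetzung} no longer produces a cubic lower bound for $f^\mathcal{A}_3$ that can be played off against the upper bound $(n+2)^3/27$.

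A minor point: what you call ``essentially the simplicial case of Terao's freeness conjecture in rank~$4$'' is not Terao's conjecture. Terao's conjecture asserts that freeness is determined by the intersection lattice; what you would need is the (different, and also open) statement that every simplicial arrangement in $\mathbb{P}^3$ has splitting characteristic polynomial. The paper notes in Remark~4 that the analogous statement already fails in $\mathbb{P}^2$, so one should not expect a soft argument here.
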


We supply some more evidence for Conjecture \ref{endlich conje} by proving that it holds true at least when restricted to simply laced arrangements with splitting characteristic polynomial (see Theorem \ref{simply laced thm}, part ii)). This is achieved by deriving combinatorial restrictions on $h^\mathcal{A},t^\mathcal{A}, f^\mathcal{A}$ and $n$. The obtained inequalities appear to be interesting by themselves. Moreover, most obtained estimates are sharp.\\

In the paper \cite{Gruenbaum_giessen}, we also find the following conjecture involving the $h$-vector of a simplicial arrangement in $\mathbb{P}^3$. 

\begin{theoremb} \label{h_2 conje}
If $\mathcal{A}$ is a simplicial arrangement in $\mathbb{P}^3$, then $h^\mathcal{A}_2 > \sum_{i \geq 3} h^\mathcal{A}_i$.
\end{theoremb}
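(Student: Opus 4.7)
My plan is to combine a local Euler identity on each of the $n$ restrictions $\mathcal{A}^H$ (each simplicial in $\mathbb{P}^2$ by the discussion following Definition 1) with the basic pair-count $\sum_{k \geq 2}\binom{k}{2} h_k^\mathcal{A} = \binom{n}{2}$, and with an analogous identity applied to the parabolic subarrangement at each vertex of $\mathcal{A}$.

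For any simplicial line arrangement $\mathcal{B}$ in $\mathbb{P}^2$ with $t$-vector $(t_2^\mathcal{B}, t_3^\mathcal{B}, \ldots)$, combining $f_0 = \sum_k t_k^\mathcal{B}$, $f_1 = \sum_k k\, t_k^\mathcal{B}$, the simplicial relation $3 f_2 = 2 f_1$, and the Euler formula $f_0 - f_1 + f_2 = \chi(\mathbb{P}^2) = 1$ gives
\[
t_2^\mathcal{B} \;=\; 3 + \sum_{k \geq 4}(k-3)\, t_k^\mathcal{B}.
\]
Applying this to every $\mathcal{A}^H$ and summing over the $n$ planes of $\mathcal{A}$ yields $\sum_H t_2^{\mathcal{A}^H} = 3n + \sum_H \sum_{k \geq 4}(k-3)\, t_k^{\mathcal{A}^H}$. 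A weight-$k$ vertex of $\mathcal{A}^H$ is a point $v \in H$ at which exactly $k$ lines of $\mathcal{A}$ contained in $H$ meet; if these lines have weights $k_1, \ldots, k_k$ in $\mathcal{A}$, then $v$ is a vertex of $\mathcal{A}$ with $w(v) = 1 + \sum_i(k_i - 1)$, and these $k$ lines correspond to the vertices of the parabolic at $v$ lying on the parabolic-line associated to $H$. Applying the displayed identity a second time at each parabolic should then translate each $\sum_H t_k^{\mathcal{A}^H}$ into a linear combination of the global invariants $h_i, t_j, n$.

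Marrying these identities with $\sum_k \binom{k}{2} h_k = \binom{n}{2}$ and the incidence count $\sum_k k\, h_k = \sum_H |\mathcal{A}^H|$, the inequality $h_2 > \sum_{k \geq 3} h_k$ should emerge from the $3n$ term in the summed restriction identity together with the strict positivity of $t_2^{\mathcal{A}^H}$ for each $H$. The main obstacle will be the bookkeeping in the middle step: the correspondence between weight-$k$ vertices of $\mathcal{A}^H$ and configurations in $\mathcal{A}$ is genuinely many-to-many, depending on how the weights $k_1, \ldots, k_k$ distribute at each such vertex, so that rewriting $\sum_H t_k^{\mathcal{A}^H}$ as a sign-controlled linear combination of the $h_i$'s requires a carefully chosen pairing of restriction identities with parabolic identities, possibly supplemented by a Melchior-type bound. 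The strictness of the inequality should then be recovered from $t_2^\mathcal{B} \geq 3$ for each restriction (since no restriction can be a pencil in a genuine $3$-arrangement) together with $n \geq 4$.
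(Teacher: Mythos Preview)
The statement you are attempting to prove is presented in the paper as a \emph{conjecture} (Conjecture~\ref{h_2 conje}), not as a theorem; the paper contains no proof of it. What the paper does is verify the inequality for all currently known irreducible simplicial arrangements in $\mathbb{P}^3$ by direct inspection of Table~1, and separately record it for the crystallographic subclass (Corollary~\ref{h_2 conj crystallo}), which follows from the classification in \cite{Cuntz4space}. There is therefore no ``paper's proof'' against which to compare your argument.

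Your proposal is explicitly a plan rather than a proof, and you correctly flag the gap yourself: the ``bookkeeping in the middle step'' that would rewrite $\sum_H t_k^{\mathcal{A}^H}$ as a sign-controlled combination of the global $h_i$'s is not carried out, and you concede it depends on how the line-weights $k_1,\dots,k_k$ distribute at each vertex of each restriction. That is precisely where the difficulty lives. The summed restriction identity $\sum_H t_2^{\mathcal{A}^H} = 3n + \sum_H\sum_{k\ge 4}(k-3)\,t_k^{\mathcal{A}^H}$ is fine, but its left-hand side does not count anything indexed purely by lines of $\mathcal{A}$: a weight-$2$ vertex of $\mathcal{A}^H$ is a vertex $v\in H$ through which exactly two lines of $\mathcal{A}$ contained in $H$ pass---a condition on the parabolic at $v$ restricted to $H$, not on the weights of those two lines in $\mathcal{A}$. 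Invoking the parabolic identity does not close the loop either, since the parabolics are simplicial $\mathbb{P}^2$-arrangements whose own $t$-vectors are again not determined by the $h$-vector of $\mathcal{A}$. Your chain of identities thus links quantities living at three different levels (global $h_i$, restriction $t_k$, parabolic $t_k$) without exhibiting any relation that collapses them to a statement in the $h_i$'s alone, let alone one of the required sign.

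Since the conjecture is open in the paper, a completed argument along these lines would be a genuine new theorem; as it stands, the step you call the ``main obstacle'' is the entire content, and the proposal does not constitute a proof.
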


In Section 4, we verify this conjecture for all currently known irreducible simplicial arrangements in $\mathbb{P}^3$. See also Theorem \ref{simply laced thm}, part iii) and Corollary \ref{h_2 conj crystallo} for related results. In order to simplify the language for the rest of this paper, an arrangement (simplicial or not) which satisfies the conclusion of Conjecture \ref{h_2 conje} will be called a \textit{Gr\"unbaum-Shephard arrangement}.

\end{section}

\begin{section}{Results and proofs}
If not stated otherwise, then throughout this section, $\mathcal{A}$ will always denote a hyperplane arrangement in $\mathbb{P}^3$ consisting of $n$ hyperplanes. If $\chi(\mathcal{A},t)$ splits over $\mathbb{R}$, then $\mathcal{A}$ is sometimes called an  \textit{arrangement with splitting (characteristic) polynomial}. We formulate and prove our two main results (see Theorem \ref{real roots rank 4}, Theorem \ref{simply laced thm}). The first one gives a precise combinatorial condition for the splitting of $\chi(\mathcal{A},t)$ over $\mathbb{R}$. The second theorem deals with simply laced arrangements. It implies that there are only finitely many different isomorphism types of \textit{simplicial} simply laced arrangements with splitting characteristic polynomial. Our techniques will rely mainly on the $h$-vector, $t$-vector and $f$-vector of $\mathcal{A}$. Therefore, most ideas are purely combinatorial in nature. We start right away with the first theorem. 
\begin{theorem} \label{real roots rank 4}
Let $\mathcal{A}$ be a hyperplane arrangement.  Write $h:=\sum_{i \geq 2} (i-1) h^\mathcal{A}_i$ and $g^\mathcal{A}_1:=\sum_{i \geq 2} h^\mathcal{A}_i$. Then the following statements hold:\\
i) The characteristic polynomial of $\mathcal{A}$ is given by $$\chi(\mathcal{A},t)=\left(t-1\right)\left( t^3+(1-n)t^2+(h+1-n)t+ h + 1 -f^\mathcal{A}_3 \right).$$
ii) If all roots of $\chi(\mathcal{A},t)$ are real, then the following relations hold:
\begin{align}
h&=\sum_{i \geq 2} (i-1) h^\mathcal{A}_i= \sum_{H \in \mathcal{A}} |\mathcal{A}^H| -g^\mathcal{A}_1\leq \left \lfloor \frac{(n+2)(n-1)}{3} \right \rfloor , \\
 f^\mathcal{A}_3 &\leq \left \lfloor\frac{(9n+18)h + 20 + 12n + 2 \sqrt{(n^2+n-2-3h )^3}-2n^3-3n^2 }{27}  \right \rfloor, \\
f^\mathcal{A}_3 &\geq \left \lceil \frac{(9n+18)h + 20 + 12n - 2 \sqrt{(n^2+n-2-3h )^3}-2n^3-3n^2 }{27}  \right \rceil.  
\end{align}
Moreover, the estimates given in (1),(2),(3) are tight.
\end{theorem}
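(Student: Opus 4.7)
For part i), my plan is to expand the characteristic polynomial by its defining sum $\chi(\mathcal{A},t)=\sum_{X\in L}\mu(\mathbb{R}^4,X)t^{\dim X}$ rank by rank in the intersection lattice. The top element $\mathbb{R}^4$ contributes $t^4$; each of the $n$ hyperplanes contributes $-t^3$; and a line $\ell$ of weight $w$ sits in an interval $[\mathbb{R}^4,\ell]$ that consists only of $\mathbb{R}^4$, the $w$ hyperplanes containing $\ell$, and $\ell$ itself, so $\mu(\mathbb{R}^4,\ell)=w-1$ and the total $t^2$-coefficient is $\sum_{i\geq 2}(i-1)h^{\mathcal{A}}_i=h$. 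Because the underlying linear arrangement in $\mathbb{R}^4$ is central, $(t-1)\mid\chi(t)$, so I can write $\chi(t)=(t-1)Q(t)$ with $Q$ a cubic whose $t^2$- and $t$-coefficients are forced by expansion to be $1-n$ and $h+1-n$. To pin down the constant term I would apply Zaslavsky's theorem: every chamber of $\mathcal{A}\subset\mathbb{P}^3$ lifts to a pair of antipodal regions of the central arrangement in $\mathbb{R}^4$, so $\chi(-1)=2f^{\mathcal{A}}_3$; combined with $\chi(-1)=-2Q(-1)$ this yields $Q(-1)=-f^{\mathcal{A}}_3$ and therefore the constant term $h+1-f^{\mathcal{A}}_3$.

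For part ii), the roots of $\chi$ are all real iff those of $Q$ are, so the question reduces to a single cubic. My plan is to depress $Q$ via $t=s+(n-1)/3$, which a direct expansion shows produces $s^3+ps+q$ with
\[
p=-\frac{n^2+n-2-3h}{3},\qquad q=\frac{(9n+18)h+20+12n-2n^3-3n^2-27f^{\mathcal{A}}_3}{27}.
\]
The classical criterion then states that a depressed real cubic has three real roots iff its discriminant $-4p^3-27q^2$ is nonnegative. Since $27q^2\geq 0$, this forces $p\leq 0$, which after clearing denominators and using $h\in\mathbb{Z}$ is exactly inequality (1). Given $p\leq 0$, the remaining condition $27q^2\leq-4p^3$ rewrites as $|q|\leq\tfrac{2}{27}\sqrt{(n^2+n-2-3h)^3}$, and solving the two-sided inequality for the integer $f^{\mathcal{A}}_3$ yields (2) and (3), with the $\lfloor\cdot\rfloor$ and $\lceil\cdot\rceil$ coming from integrality.

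The auxiliary identity $h=\sum_{H\in\mathcal{A}}|\mathcal{A}^H|-g^{\mathcal{A}}_1$ I would justify by a quick double count: the sum on the right enumerates pairs $(H,\ell)$ with $\ell\subset H$ a line of the arrangement, and each line of weight $i$ contributes $i$, so $\sum_{H}|\mathcal{A}^H|=\sum_{i\geq 2}i\,h^{\mathcal{A}}_i=h+g^{\mathcal{A}}_1$. For tightness of the three estimates I would exhibit explicit arrangements: equality in (1) forces $p=0$ and hence a triple real root of $Q$ located at $-(n-1)/3$, while equality in (2) or (3) corresponds to vanishing discriminant, i.e., a double real root of $Q$. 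In both cases, concrete witnesses can be drawn from the small simplicial arrangements recorded in the catalogue of Section 4.

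The main obstacle I anticipate is purely one of algebraic bookkeeping rather than of ideas: the depression of $Q$ and the rearrangement of $27q^2\leq-4p^3$ into the explicit forms stated in (2) and (3) must be carried out carefully, and some attention is needed to track how the sign flip produced by the $-27f^{\mathcal{A}}_3$ term in $q$ interchanges the roles of $\lfloor\cdot\rfloor$ and $\lceil\cdot\rceil$. Conceptually everything reduces to (a) the Möbius and Zaslavsky calculations on the intersection lattice and (b) the classical real-roots criterion for a single cubic, so once part i) is established, part ii) is essentially a one-variable analysis disguised by the three-parameter input $(n,h,f^{\mathcal{A}}_3)$.
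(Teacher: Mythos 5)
Your proposal follows the paper's proof essentially verbatim: part i) via the known M\"obius values in corank $\le 2$, the divisibility $\chi(1)=0$, and Zaslavsky's count $\chi(-1)=2f^{\mathcal{A}}_3$; part ii) via the nonnegativity of the discriminant of the cubic factor, which you make explicit by depressing the cubic (your $p$ and $q$ are correct, and the floors/ceilings are justified by integrality exactly as needed for the ``if and only if''). The only slip is the parenthetical claim that equality in (1) forces $p=0$: because of the floor, equality in (1) only gives $0\le n^2+n-2-3h\le 2$ (for $A^3_2(15)$ one has $n^2+n-2-3h=1$), but this concerns only the tightness remark and does not affect the argument.
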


\begin{proof}
i) Let $L:=L(\mathcal{\mathcal{A}})$ be the intersection lattice of $\mathcal{A}$. We denote the M\"obius Function of $L$ by $\mu$ and we set $\chi:=\chi(\mathcal{A},t)$. The formula may be deduced from Zaslavsky's Theorem (see Theorem 2.68 in \cite{hyperhyper}) and the fact that $\chi(1)=0$. By said theorem, we know that $\chi(-1)=2f^\mathcal{A}_3$.  Observe that the values $\mu(\mathbb{R}^4,X)$ are known for any $X \in L$ having codimension at most two (see for instance Chapter 2 in \cite{hyperhyper}), giving $\chi=\sum_{X \in L} \mu(\mathbb{R}^4,X) t^{\dim(X)} =t^4-nt^3+ht^2+at+b$ for certain $a, b \in \mathbb{Z}$. The values $a,b$ are obtained by solving the system of linear equations imposed by the conditions $\chi(1)=0, \chi(-1)=2f^\mathcal{A}_3$. Doing so, we obtain $\chi=t^4-n t^3+h t^2 + (n-f^\mathcal{A}_3) t +f^\mathcal{A}_3-1-h$. Using polynomial division to divide $\chi$ by $t-1$, we obtain the claimed formula.\\
ii) The first two equalities in (1) hold by definition of $h$ and $g^\mathcal{A}_1$. By part i), we see that $\chi(\mathcal{A},t)$ splits over $\mathbb{R}$ if and only if $\tilde{\chi}:=\frac{\chi(\mathcal{A},t)}{t-1}$ splits over $\mathbb{R}$. As $\tilde{\chi}$ is a cubic polynomial, this can happen only if the discriminant of $\tilde{\chi}$ is non-negative. Explicit computation using the formula in i) gives the conditions (1),(2) and (3).
We note that all given estimates are tight for the arrangement $A^3_2(15)$ (see \cite{Gruenbaum_giessen} and Table 1). 
\end{proof}

Before moving on towards Theorem \ref{simply laced thm}, we draw some corollaries from Theorem \ref{real roots rank 4}.

\begin{corollary} \label{chamberbound h relatio}
Assume that all the roots of $\chi(\mathcal{A},t)$ are real. \\
i) The number of chambers is bounded by  $f^\mathcal{A}_3 \leq \frac{(n+2)^3}{27}$. \\
ii) We have the following inequality: $$\sum_{i \geq 3} (i-1)(i-2) h^\mathcal{A}_i \geq \frac{(n-4)(n-1)}{3}.$$ In particular, for $n \geq 5$ there exists $j \geq 3$ such that $h^\mathcal{A}_j>0$, i.e. there exists a line which is contained in at least three planes of $\mathcal{A}$.  
\end{corollary}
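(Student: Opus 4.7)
The plan is to deduce both parts directly from Theorem~\ref{real roots rank 4} combined with an elementary double-counting identity.

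For i), I would start from the factorization $\chi(\mathcal{A},t) = (t-1)\tilde\chi(t)$ of Theorem~\ref{real roots rank 4}.i); by hypothesis $\tilde\chi$ splits over $\mathbb{R}$, say with roots $r_1,r_2,r_3$. Vieta gives $r_1+r_2+r_3 = n-1$, and evaluating the explicit formula at $t=-1$ yields $\tilde\chi(-1) = -f^\mathcal{A}_3$, so $f^\mathcal{A}_3 = (1+r_1)(1+r_2)(1+r_3)$. The coefficients of the characteristic polynomial of a matroid alternate in sign by Whitney's theorem; a short argument (substitute $t=-s$ with $s>0$) then forces every real root of $\chi(\mathcal{A},t)$ to be non-negative, so each $1+r_i \geq 1$. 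AM-GM applied to the positive reals $1+r_i$, whose sum equals $n+2$, yields
$$f^\mathcal{A}_3 = \prod_{i=1}^3 (1+r_i) \leq \left(\frac{n+2}{3}\right)^3,$$
which is exactly the claim.

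For ii), the key identity comes from double-counting: each pair of distinct hyperplanes $\{H,H'\} \subset \mathcal{A}$ determines a unique line of $\mathcal{A}$, and a line of weight $i$ is so determined in exactly $\binom{i}{2}$ ways. Hence $\sum_{i \geq 2} i(i-1) h^\mathcal{A}_i = n(n-1)$. Using the decomposition $(i-1)(i-2) = i(i-1) - 2(i-1)$ together with the definition of $h$, I obtain
$$\sum_{i \geq 3}(i-1)(i-2) h^\mathcal{A}_i = n(n-1) - 2h.$$
Plugging in the upper bound $h \leq (n+2)(n-1)/3$ from inequality~(1) of Theorem~\ref{real roots rank 4}.ii) gives the claimed inequality
$$n(n-1) - 2h \geq n(n-1) - \frac{2(n+2)(n-1)}{3} = \frac{(n-1)(n-4)}{3}.$$
For $n \geq 5$ the right-hand side is strictly positive, so at least one summand $(j-1)(j-2) h^\mathcal{A}_j$ with $j \geq 3$ must be positive, forcing $h^\mathcal{A}_j > 0$ for some $j \geq 3$.

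Both parts are essentially unpackings of Theorem~\ref{real roots rank 4}; the only step requiring extra care is the justification in i) that the real roots are non-negative (not merely real), which is a classical consequence of the Whitney sign-alternation of the coefficients of $\chi(\mathcal{A},t)$. Beyond that I do not anticipate any serious obstacles.
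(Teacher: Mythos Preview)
Your argument is correct. Part~ii) is essentially the paper's own proof: both use the double-counting identity $\sum_{i\geq 2}\binom{i}{2}h^{\mathcal{A}}_i=\binom{n}{2}$ together with inequality~(1).

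For part~i), however, you take a genuinely different route. The paper proceeds analytically: it treats the right-hand side of inequality~(2) from Theorem~\ref{real roots rank 4} as a one-variable function $\phi_n(x)$ of $x=h$, locates its maximum at $\overline{x}=\frac{n^2+n-2}{3}$, and evaluates $\phi_n(\overline{x})=(n+2)^3$. Your approach is more conceptual: write $\tilde\chi=\prod_{i=1}^3(t-r_i)$, observe from Zaslavsky's identity that $f^{\mathcal{A}}_3=\prod_i(1+r_i)$, invoke Rota's sign-alternation to guarantee $r_i\geq 0$, and finish with AM--GM. This sidesteps the explicit discriminant formulas entirely and makes the equality case transparent (equality forces $r_1=r_2=r_3=\frac{n-1}{3}$). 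The paper's approach, by contrast, keeps the dependence on $h$ visible and shows that the chamber bound is already a consequence of inequality~(2) alone, which is useful when one wants quantitative refinements. Both are short and complete; yours generalizes more readily to higher rank (compare Remark~1 and inequality~(4)).
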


\begin{proof}
i) We observe that for $n \geq 4$ the function $\phi_n:\mathbb{R}_{>0} \longrightarrow \mathbb{R}$, defined by $\phi_n(x):= (9n+18)x + 20 + 12n + 2 \sqrt{(n^2+n-2-3x )^3}-2n^3-3n^2$, takes its maximal value at $\overline{x}=\frac{n^2+n-2}{3}$. One computes $\phi_n(\overline{x})=(n+2)^3$. Thus, the claim follows using relation (2) from Theorem \ref{real roots rank 4}.\\
ii) We note that $\sum_{i \geq 2} \binom {i}{2} h^\mathcal{A}_i = \binom{n}{2}$. Using this, relation (1) gives $$\sum_{i \geq 3} (i-1)(i-2) h^\mathcal{A}_i=2 \sum_{i \geq 2} \left(\binom{i}{2}-(i-1)\right)h^\mathcal{A}_i \geq\frac{(n-4)(n-1)}{3}.$$ The last statement is obvious. 
\end{proof}

\begin{remark}
Let $\mathcal{A}^\prime$ be an arrangement in $\mathbb{P}^2$ such that $\chi(\mathcal{A}^\prime,t)$ splits over $\mathbb{R}$. It is easy to see that in this case (and only in this case) one has $f^{\mathcal{A}^\prime}_2 \leq \frac{(n+1)^2}{4}$. Now, let $d \geq 3$ be a natural number and assume that $\mathcal{A}$ is an arrangement in $\mathbb{P}^{d-1}$ whose characteristic polynomial has only real roots. Inspired by Corollary \ref{chamberbound h relatio}, part i) and the above observation, we conjecture that \begin{align}
f^\mathcal{A}_{d-1} \leq \left(1 + \frac{n-1}{d-1}\right)^{d-1}.
\end{align} We note that the conjecture holds for all arrangements presented in \cite{Cuntz4space} (it is not hard to see that the characteristic polynomials of the given arrangements split over $\mathbb{R}$). Moreover, all arrangements in \cite{Cuntz4space} are simplicial and as simplicial arrangements tend to have many chambers (one has $ f^\mathcal{A}_{d-1} \leq \frac{2 f^\mathcal{A}_{d-2}}{d} $ with equality in the simplicial case), this is quite good evidence for the truth of (4) for an arbitrary arrangement with splitting polynomial.      
\end{remark}

The following corollary allows us to rephrase Theorem 1 in the simplicial case, using only the numbers $h^\mathcal{A}_i, t^\mathcal{A}_j$ and $n$.

\begin{corollary}  \label{simp t vek coroll}
Assume that $\mathcal{A}$ is simplicial. We write $\tilde{\chi}(\mathcal{A},t):= \frac{\chi(\mathcal{A},t)}{t-1}$, $m:=m(\mathcal{A})$ and $h:=\sum_{i=2}^{m-1} (i-1)h^\mathcal{A}_i$. Then $f^\mathcal{A}_3+n=\sum_{i = 3}^m i t^\mathcal{A}_i$ and  $$\tilde{\chi}(\mathcal{A},t)=
t^3 + (1-n) t^2 + \left( 1+\sum_{i = 2}^{m-1} (i-1) h^\mathcal{A}_i \right) \left(t+1\right) +n(1-t) - \sum_{i = 3}^m i t^\mathcal{A}_i.$$ Moreover, if all roots of $\chi(\mathcal{A},t)$ are real, then $\sum_{i = 3}^m i t^\mathcal{A}_i \leq \frac{(n+2)^3}{27} + n$ and relations (2),(3) from Theorem \ref{real roots rank 4} take the following forms: \begin{align*}
\sum_{i \geq 3} i t^\mathcal{A}_i &\leq \left \lfloor\frac{(9n+18)h + 20 + 39n + 2 \sqrt{(n^2+n-2-3h )^3}-2n^3-3n^2 }{27}  \right \rfloor, \\
\sum_{i \geq 3} i t^\mathcal{A}_i &\geq \left \lceil \frac{(9n+18)h + 20 + 39n - 2 \sqrt{(n^2+n-2-3h )^3}-2n^3-3n^2 }{27}  \right \rceil.
\end{align*}  
\end{corollary}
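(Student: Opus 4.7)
The plan is to reduce the corollary to a single simplicial identity, namely
$$f^\mathcal{A}_3 + n \;=\; \sum_{i=3}^{m} i\, t^\mathcal{A}_i.$$
With that identity in hand, the stated expression for $\tilde{\chi}(\mathcal{A},t)$ follows from Theorem \ref{real roots rank 4}(i) by substituting $f^\mathcal{A}_3 = \sum_i i t^\mathcal{A}_i - n$ and regrouping the constant and linear coefficients into $(1+h)(t+1) + n(1-t)$. Similarly, the two rewritten inequalities and the chamber bound follow from Theorem \ref{real roots rank 4}(ii) and Corollary \ref{chamberbound h relatio}(i) by adding $n$ to the stated bounds on $f^\mathcal{A}_3$; inside the floor/ceiling this contributes $27n$, turning $12n$ into $39n$.

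The identity itself I would establish by a two-step double count. First, counting vertex--hyperplane incidences gives
$$\sum_i i\, t^\mathcal{A}_i \;=\; \sum_v w(v) \;=\; \sum_{H \in \mathcal{A}} f^{\mathcal{A}^H}_0.$$
Each restriction $\mathcal{A}^H$ is a simplicial arrangement in $\mathbb{P}^2$, so the Euler relation $f_0 - f_1 + f_2 = 1$ combined with the triangle condition $3 f_2 = 2 f_1$ forces $f^{\mathcal{A}^H}_0 = 1 + \tfrac{1}{2} f^{\mathcal{A}^H}_2$. Summing over $H$ and using that every $2$-cell of $\mathcal{A}$ lies on a unique hyperplane (so that $\sum_H f^{\mathcal{A}^H}_2 = f^\mathcal{A}_2$), I get $\sum_i i\, t^\mathcal{A}_i = n + \tfrac{1}{2} f^\mathcal{A}_2$. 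Finally, facet counting in $\mathbb{P}^3$ — each chamber is a tetrahedron with four triangular facets, each facet bounding two chambers — gives $f^\mathcal{A}_2 = 2 f^\mathcal{A}_3$, which produces the desired identity.

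A small preliminary is the truncation $h = \sum_{i=2}^{m-1}(i-1)h^\mathcal{A}_i$: a line of weight $i$ must meet some hyperplane of $\mathcal{A}$ not containing it (one exists by the arrangement axiom $\bigcap_k H_k = \emptyset$), yielding a vertex of weight $\geq i+1$ and thus forcing $h^\mathcal{A}_i = 0$ for $i \geq m$. The genuine obstacle is the simplicial identity, but it really is just two stacked Euler characteristics — one in $\mathbb{P}^2$ via the restrictions, one in $\mathbb{P}^3$ via the simplicial facet--chamber incidence — and with it in place, the remaining content of the corollary is algebraic bookkeeping.
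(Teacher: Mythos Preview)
Your proof is correct and follows essentially the same route as the paper's: both establish the identity $f^\mathcal{A}_3+n=\sum_i i\,t^\mathcal{A}_i$ by combining the simplicial facet--chamber relation $f^\mathcal{A}_2=2f^\mathcal{A}_3$, the decomposition $f^\mathcal{A}_2=\sum_H |\mathcal{K}(\mathcal{A}^H)|$, the planar simplicial Euler relation in each restriction, and the double count $\sum_H f^{\mathcal{A}^H}_0=\sum_i i\,t^\mathcal{A}_i$; the remaining claims then follow from Theorem~\ref{real roots rank 4} and Corollary~\ref{chamberbound h relatio} exactly as you indicate. Your explicit justification of the truncation $h^\mathcal{A}_i=0$ for $i\geq m$ is a welcome detail the paper leaves implicit.
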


\begin{proof}
As $\mathcal{A}$ is simplicial, we have $f^\mathcal{A}_3=\frac{1}{2} f^\mathcal{A}_2= \frac{1}{2} \sum_{H \in \mathcal{A}} |\mathcal{K}(\mathcal{A}^H)|$. Let $t^H$ denote the $t$-vector of the arrangement $\mathcal{A}^H$. Then one has:\\ $\sum_{H \in \mathcal{A}}  |\mathcal{K}(\mathcal{A}^H)|=\sum_{H \in \mathcal{A}} \left(-2 +\sum_{i \geq 2}  2t^H_i \right)=2\left( -n + \sum_{H \in \mathcal{A}, i \geq 2} t^H_i \right)$. \\
By double counting, we conclude that $f^\mathcal{A}_3=-n + \sum_{i \geq 3} it^\mathcal{A}_i$. The remaining claims all follow from Theorem \ref{real roots rank 4} and Corollary \ref{chamberbound h relatio}.
\end{proof}

We now move towards our result on simply laced arrangements. In order to prove it, we need one more lemma.   

\begin{lemma} \label{parabol simply laced}
Let $\mathcal{A}$ be a simplicial hyperplane arrangement. If $\mathcal{A}$ is simply laced, then the same is true for every parabolic subarrangement of $\mathcal{A}$. In particular, we have $m(\mathcal{A}) \leq 7$. 
\end{lemma}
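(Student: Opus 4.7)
The plan is to prove the two assertions of the lemma in sequence.

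For the first assertion, $\mathcal{A}_v$ is already simplicial in $\mathbb{P}^2$ by the cited Lemma 2.17 of \cite{cuntz_muecksch}. Since every vertex of a two-dimensional simplicial arrangement is a corner of some chamber, simply-lacedness of $\mathcal{A}_v$ is equivalent to $t_j^{\mathcal{A}_v}=0$ for all $j\geq 4$. The vertices of $\mathcal{A}_v$ correspond canonically to the lines of $\mathcal{A}$ passing through $v$, with matching weights: a vertex of $\mathcal{A}_v$ lifts to a $2$-dimensional subspace of $\mathbb{R}^4$ containing $\langle v\rangle$, i.e.\ to a line $L$ of $\mathcal{A}$ through $v$, and the planes of $\mathcal{A}_v$ through the vertex are precisely the planes of $\mathcal{A}$ containing $L$ (any such plane automatically contains $v$). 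The hypothesis $h_i^{\mathcal{A}}=0$ for $i\geq 4$ therefore forces $w(L)\leq 3$ for every line through $v$, giving the desired vanishing.

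For the bound $m(\mathcal{A})\leq 7$, pick a vertex $v$ with $w(v)=m(\mathcal{A})=:n'$; by the first part, $\mathcal{A}_v$ is a simplicial simply laced arrangement of $n'$ hyperplanes in $\mathbb{P}^2$. Melchior's identity $t_2=3+\sum_{i\geq 4}(i-3)t_i$ for simplicial arrangements in $\mathbb{P}^2$ collapses under simple-lacedness to $t_2^{\mathcal{A}_v}=3$. Combining this with $\binom{n'}{2}=\sum_{i\geq 2}\binom{i}{2}t_i^{\mathcal{A}_v}$ gives $t_3^{\mathcal{A}_v}=(n'(n'-1)-6)/6$, and hence $h':=\sum_{i\geq 2}(i-1)t_i^{\mathcal{A}_v}=((n')^2-n'+3)/3$.

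The final step mirrors the proof of Theorem \ref{real roots rank 4} one rank lower: the characteristic polynomial of any rank-$3$ arrangement factors as $\chi(\mathcal{A}_v,t)=(t-1)\bigl(t^2-(n'-1)t+h'-n'+1\bigr)$, and splits over $\mathbb{R}$ iff the discriminant $(n'-1)^2-4(h'-n'+1)=(n'-1)(n'+3)-4h'$ is non-negative. Inserting the explicit value of $h'$ obtained above, this inequality simplifies to $(n'-3)(n'-7)\leq 0$, forcing $n'\leq 7$. The one missing input — and the delicate part of the argument — is that $\chi(\mathcal{A}_v,t)$ really does split over $\mathbb{R}$; this is standard for simplicial arrangements in $\mathbb{P}^2$ and can be traced, for instance, to the freeness of simplicial rank-$3$ arrangements. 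A self-contained combinatorial alternative is to invoke the Kelly--Moser inequality $t_2\geq 3n'/7$ for non-concurrent real $\mathbb{P}^2$-arrangements, which together with $t_2^{\mathcal{A}_v}=3$ immediately gives $n'\leq 7$.
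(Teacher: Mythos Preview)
Your proof of the first assertion is correct and more direct than the paper's, which instead invokes Lemma~3.7 of \cite{cuntz_muecksch} to realize the Coxeter diagram at a chamber of $\mathcal{A}_v$ as a subgraph of one at a chamber of $\mathcal{A}$; your weight-correspondence argument bypasses this machinery entirely.

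For the bound $m(\mathcal{A})\leq 7$, your derivation of $t_2^{\mathcal{A}_v}=3$ from the equality case of Melchior's inequality is fine, and the Kelly--Moser alternative ($t_2\geq 3n'/7 \Rightarrow n'\leq 7$) is correct and elegant. However, the other justification you offer --- that $\chi(\mathcal{A}_v,t)$ splits over $\mathbb{R}$ because this ``is standard for simplicial arrangements in $\mathbb{P}^2$'' and follows from ``the freeness of simplicial rank-$3$ arrangements'' --- is \emph{false}. The paper itself points out (in the remark following Table~1) that the simplicial line arrangement $A(13,4)$ from \cite{Cuntz27} has a characteristic polynomial that does not split over $\mathbb{R}$; in particular such arrangements need not be free. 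You should drop that claim entirely and rely only on Kelly--Moser.

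By comparison, the paper establishes $|\mathcal{A}_v|\leq 7$ via a case split using two lemmas from \cite{free_simp_rang3}: if some chamber of $\mathcal{A}_v$ has two weight-two corners, then $\mathcal{A}_v$ is a near-pencil (hence $|\mathcal{A}_v|\leq 4$); otherwise a separate combinatorial bound applies to give $|\mathcal{A}_v|\leq 7$. Your Melchior-plus-Kelly--Moser route trades these specialized lemmas for a single classical incidence inequality, which is arguably cleaner, though it does rely on the realness of $\mathcal{A}_v$.
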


\begin{proof}
Let $\mathcal{B}$ be a parabolic subarrangement of $\mathcal{A}$. Then for any chamber $C \in \mathcal{K}(\mathcal{B})$, the corresponding Coxeter diagram $\Gamma^C$ is obtained as a subgraph of the Coxeter diagram $\Gamma^{C^\prime}$, where $C^\prime \in \mathcal{K}(\mathcal{A})$ is a suitable chamber of $\mathcal{A}$ (see for instance Lemma 3.7 in \cite{cuntz_muecksch}). In particular, this proves that $\mathcal{B}$ is simply laced, i.e. every vertex of $\mathcal{B}$ is contained in at most three lines of $\mathcal{B}$. Now fix a chamber  $C \in \mathcal{K}(\mathcal{B})$. First assume that $\overline{C}$ contains two vertices which are contained in precisely two lines of $\mathcal{B}$. Then Lemma 2 from paper \cite{free_simp_rang3} shows that $\mathcal{B}$ is a near pencil arrangement. In particular, we have $|\mathcal{B}| \leq 4$. Now assume that every chamber of $\mathcal{B}$ contains at most one vertex which is contained in precisely two lines of $\mathcal{B}$. As all vertices of $\mathcal{B}$ are contained in at most three lines, we may apply Lemma 3 in \cite{free_simp_rang3} to obtain $|\mathcal{B}| \leq 7$. 
\end{proof}

\begin{remark}
If $\mathcal{A}$ is an irreducible simplicial arrangement, then one has $m(\mathcal{A}) \geq 6$. To see this, we argue as follows: as $\mathcal{A}$ is assumed to be irreducible, we know by Lemma 3.5 in \cite{cuntz_muecksch} that the Coxeter diagram $\Gamma^C$ is connected for any $C \in \mathcal{K}(\mathcal{A})$; in particular, $\Gamma^C$ contains at least two connected subgraphs on three vertices for any $C \in \mathcal{K}(\mathcal{A})$. Now denote by $i(C)$ the number of vertices $v \in C$ such that $\mathcal{A}_v$ is irreducible. We apply Lemma 3.5 from \cite{cuntz_muecksch} once more and obtain $0 < 2 f^\mathcal{A}_3 \leq \sum_{C \in \mathcal{K}(\mathcal{A})} i(C) \leq \sum_{j \geq 6} \lambda_j t^\mathcal{A}_j$ for certain non-negative $\lambda_j \in \mathbb{Z}$. Here, the last inequality holds because every parabolic subarrangement $\mathcal{A}_v$ of size at most five is reducible (see for instance \cite{Gruenbaum}).  
\end{remark}

\begin{theorem} \label{simply laced thm}
Assume that $\mathcal{A}$ is simply laced and that all the roots of $\chi(\mathcal{A},t)$ are real. Then the following is true: \\
i) We have $h^\mathcal{A}_2 \leq 2n-2$ and $h^\mathcal{A}_3 \geq \frac{(n-4)(n-1)}{6}$.  
Moreover, one has \begin{align}
\frac{(n+2)^3}{27} \geq f^\mathcal{A}_3 \geq \frac{n^3+6n+20+3 h^\mathcal{A}_2 (n+2)-2 \sqrt{\left(2n-2-h^\mathcal{A}_2\right)^3} }{27}.
\end{align} In particular, we have $\lim_{n \to \infty} \frac{f^\mathcal{A}_3}{n^3}=\frac{1}{27}$ and (5) may be considered asymptotically optimal.\\
ii) If $\mathcal{A}$ is simplicial, then $n \leq 119$. \\
iii) If $\mathcal{A}$ is a Grünbaum-Shephard arrangement, then $n \leq 15$.
\end{theorem}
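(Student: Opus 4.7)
The plan is to handle the three parts in sequence. Part i) will follow by directly unpacking Corollary \ref{chamberbound h relatio} under the simply laced hypothesis; part ii) will combine part i) with an edge-chamber incidence identity and the vertex-weight cap $m(\mathcal{A}) \leq 7$ supplied by Lemma \ref{parabol simply laced}; part iii) will be an immediate consequence of part i) together with the Gr\"unbaum--Shephard hypothesis.

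For part i), the simply laced assumption forces $h^\mathcal{A}_i = 0$ for $i \geq 4$, so the sum in Corollary \ref{chamberbound h relatio}(ii) collapses to $2 h^\mathcal{A}_3 \geq (n-4)(n-1)/3$. Combining this with the identity $\binom{n}{2} = \sum_{i \geq 2} \binom{i}{2} h^\mathcal{A}_i = h^\mathcal{A}_2 + 3 h^\mathcal{A}_3$ gives $h^\mathcal{A}_3 \geq (n-4)(n-1)/6$ and $h^\mathcal{A}_2 \leq 2(n-1)$. The upper bound on $f^\mathcal{A}_3$ in (5) is Corollary \ref{chamberbound h relatio}(i) verbatim. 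For the lower bound on $f^\mathcal{A}_3$ I would substitute $h = h^\mathcal{A}_2 + 2 h^\mathcal{A}_3$ together with $3 h^\mathcal{A}_3 = \binom{n}{2} - h^\mathcal{A}_2$ into inequality (3) of Theorem \ref{real roots rank 4}; the key simplification is that $n^2 + n - 2 - 3h$ reduces to $2n-2-h^\mathcal{A}_2$, after which a direct rearrangement of the numerator yields the stated expression. The asymptotic $f^\mathcal{A}_3/n^3 \to 1/27$ then follows since $0 \leq h^\mathcal{A}_2 \leq 2n-2$ forces both the correction $3(n+2) h^\mathcal{A}_2$ and the radical $2\sqrt{(2n-2-h^\mathcal{A}_2)^3}$ to be $o(n^3)$.

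For part ii), the central quantity is $V_3 := \sum_{l : w(l)=3} |V_l|$, the total number of incidences between vertices and weight-$3$ lines. Lemma \ref{parabol simply laced} provides the uniform cap $w(v) \leq 7$. To bound $V_3$ from below I would fix a weight-$3$ line $l$: every vertex $v \in l$ satisfies $4 \leq w(v) \leq 7$, and since each of the $n-3$ hyperplanes not containing $l$ meets $l$ in exactly one of its vertices, one has $\sum_{v \in l}(w(v)-3) = n-3$; each summand being at most $4$ forces $|V_l| \geq (n-3)/4$, whence $V_3 \geq h^\mathcal{A}_3 (n-3)/4 \geq (n-4)(n-1)(n-3)/24$ by part i). For the matching upper bound I would use the simplicial edge-chamber incidence identity: each chamber has $6$ edges and each edge on a line of weight $w$ borders exactly $2w$ chambers, so $6 f^\mathcal{A}_3 = 2 \sum_l w(l) |V_l| \geq 6 V_3$, i.e.\ $V_3 \leq f^\mathcal{A}_3 \leq (n+2)^3/27$. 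Chaining the two estimates yields
\[
27(n-4)(n-1)(n-3) \leq 24(n+2)^3,
\]
which after expansion is equivalent to $n^3 - 120 n^2 + 75 n - 172 \leq 0$; evaluating at $n = 119$ (negative) and $n = 120$ (positive) pins the bound at $n \leq 119$.

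For part iii), the Gr\"unbaum--Shephard hypothesis combined with simply laced reads $h^\mathcal{A}_2 > \sum_{i \geq 3} h^\mathcal{A}_i = h^\mathcal{A}_3$, and part i) then gives $2(n-1) \geq h^\mathcal{A}_2 > h^\mathcal{A}_3 \geq (n-4)(n-1)/6$; cancelling $(n-1)$ yields $12 > n-4$, hence $n \leq 15$. The main obstacle I anticipate is in part ii): both the correct identification of the edge-chamber identity that provides $V_3 \leq f^\mathcal{A}_3$, and the fact that the lower bound $|V_l| \geq (n-3)/4$ on a weight-$3$ line relies essentially on the cap $m(\mathcal{A}) \leq 7$ from Lemma \ref{parabol simply laced}; without this cap, the summand $w(v)-3$ is unbounded and the argument collapses. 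Part i) is mainly algebraic bookkeeping and part iii) is a one-liner once part i) is in hand.
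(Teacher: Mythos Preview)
Your proposal is correct and follows essentially the same route as the paper's proof: part i) is the same substitution of $h=(n^2-n+h^\mathcal{A}_2)/3$ into inequality (3), part ii) is the same double count of edge--chamber incidences along weight-$3$ lines combined with the per-line lower bound coming from $m(\mathcal{A})\leq 7$, and part iii) is identical. The only cosmetic difference is that you phrase the per-line count via vertices $|V_l|$ while the paper speaks of edges, but on a projective line these coincide; your explicit polynomial check at $n=119,120$ is a welcome addition that the paper leaves implicit.
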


\begin{proof}
i) The lower bound for $h^\mathcal{A}_3$ follows from Corollary \ref{chamberbound h relatio}, part ii). Using relation (1), we obtain $\frac{(n+2)(n-1)}{3} \geq h^\mathcal{A}_2+2 h^\mathcal{A}_3 \geq h^\mathcal{A}_2 + \frac{(n-4)(n-1)}{3}$, proving the upper bound for $h^\mathcal{A}_2$. In order to obtain (5), remember that $h^\mathcal{A}_2 + 3 h^\mathcal{A}_3 = \binom{n}{2}$. Solving for $h^\mathcal{A}_2$, we obtain $h=h^\mathcal{A}_2 + 2 h^\mathcal{A}_3= \frac{n^2-n+h^\mathcal{A}_2}{3}$. Inequality (5) is now obtained by substituting the obtained expression for $h$ in inequality (3).  \\
ii) By Lemma \ref{parabol simply laced} we know that $m(\mathcal{A}) \leq 7$. Now, for each chamber $C \in \mathcal{K}(\mathcal{A})$, we denote by $C_i$ the number of edges of $\overline{C}$ which are contained in a line of weight $i$. We obtain the following estimate: \begin{align}
f^\mathcal{A}_3 =  \sum_{C \in \mathcal{K}(\mathcal{A})} \frac{C_2 + C_3}{6} \geq \sum_{C \in \mathcal{K}(\mathcal{A})} \frac{C_3}{6}	 \geq \frac{(n-3)h^\mathcal{A}_3}{4}  \geq \frac{ (n-4)(n-3)(n-1)}{24}.
\end{align} For this observe that every chamber has precisely six edges. Furthermore, every line of weight three contains at least $\frac{n-3}{4}$ edges (every vertex is incident with at most seven planes). Moreover, any such edge is contained in six chambers and we have $h^\mathcal{A}_3 \geq \frac{(n-4)(n-1)}{6}$ by Corollary \ref{chamberbound h relatio}, part ii). By part i) of the same corollary, we know that there are at most $\frac{(n+2)^3}{27}$ chambers, i.e. we have  $\frac{(n+2)^3}{27} \geq f^\mathcal{A}_3 \geq \frac{(n-4)(n-3)(n-1)}{24}$. This is possible only for $n \leq 119$.\\
iii) If $\mathcal{A}$ is a Gr\"unbaum-Shephard arrangement, then by part i) we have $2n-2 \geq h^\mathcal{A}_2 > h^\mathcal{A}_3 \geq \frac{(n-4)(n-1)}{6}$, forcing $n \leq 15$.
\end{proof}

\begin{remark}
The simplicial arrangement $A^3_2(15)$ shows that the upper bound for $n$ given in Theorem \ref{simply laced thm}, part iii) is sharp (see Table 1). 
\end{remark}

We observe that inequality (6) immediately generalizes to the following result, which appears to be interesting in its own right.

\begin{corollary} \label{f_3 h abschaetzung}
Assume that $\mathcal{A}$ is simplicial and write $m:=m( \mathcal{A})$. Then we have $ \sum_{i=3}^m i t^\mathcal{A}_i  \geq n + \sum_{i = 2}^{m-1} \frac{i(n-i)}{3(m-i)} h^\mathcal{A}_i 
$.  
\end{corollary}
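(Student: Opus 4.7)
The plan is to mimic the double-counting argument used in the proof of Theorem \ref{simply laced thm}, part ii), but allow all line weights up to $m$ rather than only weights $2$ and $3$. First I would rephrase the target: by the identity $f^\mathcal{A}_3 + n = \sum_{i=3}^m i\, t^\mathcal{A}_i$ supplied by Corollary \ref{simp t vek coroll}, the claim is equivalent to
$$3 f^\mathcal{A}_3 \;\geq\; \sum_{i=2}^{m-1} \frac{i(n-i)}{m-i}\, h^\mathcal{A}_i.$$

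Next I would count chamber-edge incidences. Since $\mathcal{A}$ is simplicial in $\mathbb{P}^3$, each chamber is a simplicial cone with exactly six one-dimensional faces. On the other hand, if $e$ is an edge lying on a line $\ell$ of weight $i$, then the $i$ hyperplanes through $\ell$ cut a transverse disk at an interior point of $e$ into $2i$ sectors, so $e$ lies in the boundary of $2i$ chambers. Letting $E_i$ denote the total number of edges lying on a line of weight $i$, this gives $6 f^\mathcal{A}_3 = \sum_{i=2}^m 2i\, E_i$, i.e. $3 f^\mathcal{A}_3 = \sum_{i=2}^m i\, E_i$.

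Third, I would bound $E_i$ from below for $i < m$. Fix a line $\ell$ of weight $i$. Every vertex $v \in \ell$ satisfies $i+1 \leq w(v) \leq m$, and each of the $n-i$ hyperplanes not containing $\ell$ meets $\ell$ at exactly one such vertex, contributing exactly $w(v)-i$ to that vertex. Hence
$$n - i \;=\; \sum_{v \in \ell} \bigl(w(v) - i\bigr) \;\leq\; (m-i) \cdot \#\{v \in \ell\},$$
so $\ell$ carries at least $(n-i)/(m-i)$ vertices. Because a projective line is topologically a circle, the number of edges on $\ell$ equals the number of vertices on it, and summing over the $h^\mathcal{A}_i$ lines of weight $i$ yields $E_i \geq (n-i) h^\mathcal{A}_i / (m-i)$.

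Finally I would combine these ingredients: dropping the non-negative $i=m$ term gives
$$3 f^\mathcal{A}_3 \;=\; \sum_{i=2}^m i\, E_i \;\geq\; \sum_{i=2}^{m-1} \frac{i(n-i)}{m-i}\, h^\mathcal{A}_i,$$
and substituting $f^\mathcal{A}_3 = \sum_{i \geq 3} i\, t^\mathcal{A}_i - n$ yields exactly the claimed estimate. I do not expect any real obstacle: the only points meriting care are the local incidence count around an edge (which depends on simpliciality) and the identification of edges with vertices on a projective line, both of which are standard.
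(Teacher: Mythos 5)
Your proposal is correct and follows essentially the same double-counting argument as the paper: six edges per simplicial chamber, $2i$ chambers around each edge on a line of weight $i$, and at least $\frac{n-i}{m-i}$ edges on each such line, combined via Corollary \ref{simp t vek coroll}. The only difference is that you spell out the vertex-counting justification for the lower bound on edges per line, which the paper leaves implicit.
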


\begin{proof}
We use the same argument as in the proof of Theorem \ref{simp t vek coroll}, part ii). As $\mathcal{A}$ is simplicial, every chamber of $\mathcal{A}$ has six edges and every edge is contained in a line of weight $i$ for some $i \geq 2$. Moreover, every line of weight $i$ contains at least $\frac{n-i}{m-i}$ edges. Finally, every edge which is contained in a line of weight $i$ is contained in precisely $2i$ chambers of $\mathcal{A}$. This implies $$6 f^\mathcal{A}_3 \geq \sum_{i = 2}^{m-1} \frac{2i(n-i)}{m-i} h^\mathcal{A}_i 
$$ and the claim follows using Corollary \ref{simp t vek coroll}.
\end{proof}

We close this section with the following result, which classifies irreducible simplicial arrangements in $\mathbb{P}^3$ having the smallest possible multiplicity.

\begin{theorem}
Assume that $\mathcal{A}$ is an irreducible simplicial arrangement. If $m(\mathcal{A})=6$, then $\mathcal{A}$ is of type $\mathcal{A}(A_4)$ or $\mathcal{A}(D_4)$. In particular, $\mathcal{A}$ is a simply laced Gr\"unbaum-Shephard arrangement with splitting polynomial.
\end{theorem}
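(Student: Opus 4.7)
My plan is to combine the argument from the remark preceding this theorem with the classification (see \cite{Gruenbaum}) of small simplicial line arrangements in $\mathbb{P}^2$ and the inequalities of the preceding corollaries. Because $\mathcal{A}$ is irreducible, that remark shows every chamber $C$ satisfies $i(C) \geq 2$, where $i(C)$ counts vertices of $C$ whose parabolic subarrangement is irreducible. By the remark's observation that parabolic subarrangements of size $\leq 5$ are reducible, each such vertex has weight at least $6$; under $m(\mathcal{A}) = 6$ the weight is exactly $6$, so the parabolic is an irreducible simplicial arrangement of $6$ lines in $\mathbb{P}^2$. According to Gr\"unbaum's classification, up to isomorphism the unique such arrangement is the reflection arrangement $\mathcal{A}(A_3)$. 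Thus every chamber of $\mathcal{A}$ contains at least two vertices whose parabolic is of type $A_3$.

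Next I would argue that $\mathcal{A}$ is simply laced, i.e., $h^\mathcal{A}_i = 0$ for $i \geq 4$. The vertices of $\mathcal{A}(A_3)$ have weight $2$ or $3$ only, so every line of $\mathcal{A}$ through an $A_3$-vertex has weight at most $3$. Fix a chamber $C$ with $A_3$-vertices $v_1, v_2$: then five of the six edges of $C$ are incident to $v_1$ or $v_2$ and hence lie on lines of weight $\leq 3$. Suppose the remaining edge $v_3 v_4$ lay on a line $\ell$ of weight $w \geq 4$. Walking around $\ell$ through the $2w$ chambers cyclically incident to $\ell$, the two endpoints on $\ell$ of the edge-on-$\ell$ in each such chamber would have to be the non-$A_3$-vertices, forcing the other two vertices to be of type $A_3$. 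A bookkeeping argument at the vertices on $\ell$, using $w(v) \leq 6$ and the classification of simplicial arrangements in $\mathbb{P}^2$ of size $\leq 6$ with a point of multiplicity $\geq w$ (near-pencils when $w \in \{4, 5\}$, and impossible when $w = 6$ because then the parabolic at a vertex on $\ell$ is not a proper arrangement), produces a contradiction in each case. Hence $\mathcal{A}$ is simply laced.

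With $\mathcal{A}$ simply laced and $m(\mathcal{A}) = 6$, the inequalities of Theorem \ref{simply laced thm} and Corollary \ref{f_3 h abschaetzung} apply. After noting (either a posteriori upon identifying $\mathcal{A}$, or directly from the local $A_3$-structure which supplies an explicit factorisation of $\chi(\mathcal{A},t)$) that the characteristic polynomial splits over $\mathbb{R}$, these inequalities cut $n$ down to a small explicit range containing $10$ and $12$. Consulting the catalogue in Table~1 of Section~4 identifies the only irreducible simplicial arrangements in $\mathbb{P}^3$ with $m = 6$ as $\mathcal{A}(A_4)$ (with $n = 10$, all rank-$3$ parabolics of type $A_3$) and $\mathcal{A}(D_4)$ (with $n = 12$, rank-$3$ parabolics of type $A_3$ or $A_1^3$), both of which are simply laced reflection arrangements with splitting characteristic polynomial, yielding the ``in particular'' statement. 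The main obstacle is the case analysis ruling out lines of weight $\geq 4$ in the second paragraph; once simply laced is in hand, the bound on $n$ and the matching to the catalogue are routine.
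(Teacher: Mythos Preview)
Your proposal has two genuine gaps.

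First, and most seriously, your final step ``consulting the catalogue in Table~1'' is circular. The paper states explicitly that Table~1 ``only reflects the author's current state of knowledge'' and that the authors ``do not claim that our catalogue contains all arrangements up to a certain size''. So even after you bound $n$, you cannot simply read off from the catalogue that $\mathcal{A}(A_4)$ and $\mathcal{A}(D_4)$ are the only possibilities; that is exactly what the theorem is supposed to \emph{establish}. The paper avoids this by deriving, in each Coxeter-diagram case, exact equalities: for instance in the $D_4$ case one gets $t^\mathcal{A}_3=t^\mathcal{A}_6=n$, $f^\mathcal{A}_3=8n$, and then a direct edge-counting bound forces $n\leq 12$; the cases $n\in\{10,11\}$ are ruled out by explicit local contradictions, and at $n=12$ one determines all restrictions $\mathcal{A}^H$ to be of type $A(7,1)$, which pins down $\mathcal{A}$ uniquely. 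The $A_4$ case is handled analogously. No appeal to a catalogue is made.

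Second, your use of Theorem~\ref{simply laced thm} and the related inequalities requires the characteristic polynomial to split over $\mathbb{R}$, but that is part of the \emph{conclusion} of the theorem you are proving, not a hypothesis. Your two suggested justifications do not work: ``a posteriori upon identifying $\mathcal{A}$'' is circular, and ``the local $A_3$-structure supplies an explicit factorisation of $\chi(\mathcal{A},t)$'' is unsubstantiated---having $A_3$ parabolics at some vertices does not by itself produce a factorisation of the global characteristic polynomial. The paper never assumes splitting; its bounds on $n$ come from the purely combinatorial Corollary~\ref{simp t vek coroll} and the edge-counting technique of Corollary~\ref{f_3 h abschaetzung}, applied after the Coxeter diagram has been determined.

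Finally, your route to simply-lacedness via a ``bookkeeping argument at the vertices on $\ell$'' is only sketched, and you yourself flag it as the main obstacle. The paper bypasses this entirely: since every irreducible parabolic is $A(6,1)$, the Coxeter diagram $\Gamma^C$ at each chamber is a connected simply-laced graph on four vertices, hence of type $A_4$, $D_4$, or $\tilde{A}_3$; the affine type $\tilde{A}_3$ is then excluded using that some parabolic must be reducible (Theorem~2.5 in \cite{cuntz_muecksch}), and Lemma~3.4 in \cite{cuntz_muecksch} forces a single diagram type across all chambers. This gives simply-lacedness for free and sets up the two exhaustive cases.
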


\begin{proof}
a) Every irreducible parabolic subarrangement of $\mathcal{A}$ is of type $A(6,1)$:
if $\mathcal{B}$ is such an arrangement, then by assumption we have $|\mathcal{B}|=6$. Using the enumeration provided in \cite{Cuntz27}, we conclude that $\mathcal{B}$ must be of type $A(6,1)$. \\
b) There is a fixed Coxeter diagram $\Gamma$ such that $\Gamma=\Gamma^C$ for every $C \in \mathcal{K}(\mathcal{A})$: let $C \in \mathcal{K}(\mathcal{A})$ be a chamber. Then by part a), the graph $\Gamma^C$ is either of type $A_4$, $D_4$ or it is of type $\tilde{A_3}$. Using this, the claim follows from Lemma 2.31 and Lemma 3.4 in \cite{cuntz_muecksch}. \\
c) Only Coxeter diagrams of type $A_4$, $D_4$ can appear as $\Gamma^C$ for $C \in \mathcal{K}(\mathcal{A})$: note that by Theorem 2.5 in \cite{cuntz_muecksch}, every simplicial arrangement in $\mathbb{P}^3$ has a reducible parabolic subarrangement.  Using part b), we conclude that for any $C \in \mathcal{K}(\mathcal{A})$, the Coxeter diagram $\Gamma^C$ must be of type $A_4$ or $D_4$.  \\
d) If $\Gamma^C$ is of type $D_4$ for every $C \in \mathcal{K}(\mathcal{A})$, then $\mathcal{A}$ is of type $\mathcal{A}(D_4)$: every chamber $C$ contains exactly one vertex of weight three and three vertices of weight six. In particular, we have $t^\mathcal{A}_4=t^\mathcal{A}_5=0$, $t^\mathcal{A}_3= t^\mathcal{A}_6$, and $f^\mathcal{A}_3=8 t^\mathcal{A}_3$. By Corollary \ref{simp t vek coroll}, we have $3 t^\mathcal{A}_4 + 6 t^\mathcal{A}_6 = f^\mathcal{A}_3 + n$. We obtain $t^\mathcal{A}_3=t^\mathcal{A}_6=n$ and $f^\mathcal{A}_3= 8n$. Furthermore, every chamber has exactly three edges of weight two and three edges of weight three. Thus, using similar techniques as in the proof of Corollary \ref{f_3 h abschaetzung}, we obtain: $24n=3 f^\mathcal{A}_3 \geq (n-2)h^\mathcal{A}_2 $, $24n=3 f^\mathcal{A}_3 \geq 2 (n-3)h^\mathcal{A}_3 $. We conclude that $\frac{24n}{n-2}+\frac{36n}{n-3} \geq h^\mathcal{A}_2 + 3 h^\mathcal{A}_3=\frac{n^2-n}{2}$, proving that $n \leq 12$. Next, we prove that $n \in \lbrace 10, 11 \rbrace$ is impossible. Assume that $n=10$ and pick a vertex $v$ of weight three. Then there exists a line $\ell$ of weight two such that $v \in \ell$. By the pigeon pole principle, we conclude that $\ell$ contains at least one edge which is bounded by two vertices of weight three. In particular, there exists $H \in \mathcal{A}$ such that $\ell \in \mathcal{A}^H$. But then $\mathcal{A}^H$ is reducible, contradicting the irreducibility of $\mathcal{A}$. Now assume that $n=11$ and choose a line $\ell$ of weight three. Then $\ell$ contains only vertices of weight six. Clearly, $\ell$ must contain at least $\left \lceil \frac{11-3}{6-3} \right \rceil=\left \lceil\frac{8}{3}\right \rceil=3$ such vertices. However, this is possible only for $n \geq 12$, another contradiction. It is not hard to check that $n \leq 9$ is impossible as well. We conclude that $n=12$. Next, observe that in this case every vertex of weight three is contained in precisely three lines of weight two and every line of weight two contains exactly two vertices of weight three and two vertices of weight six; similarly, every line of weight three contains precisely three vertices of weight six. Using Lemma 3 from \cite{free_simp_rang3}, this implies that for every $H \in \mathcal{A}$, the restricted arrangement $\mathcal{A}^H$ is either of type $A(6,1)$ or $A(7,1)$. However, as $96=f^\mathcal{A}_3=\frac{1}{2} \sum_{H \in \mathcal{A}} |\mathcal{K}(\mathcal{A}^H)|$, we conclude that every restriction must be of type $A(7,1)$. This implies that $\mathcal{A}$ must be of type $\mathcal{A}(D_4)$. \\
e) If $\Gamma^C$ is of type $A_4$ for every $C \in \mathcal{K}(\mathcal{A})$, then $\mathcal{A}$ itself is of type $\mathcal{A}(A_4)$: every chamber $C$ contains exactly two vertices of weight four and two vertices of weight six. In particular, we have $t^\mathcal{A}_3=t^\mathcal{A}_5=0$, $t^\mathcal{A}_4=2 t^\mathcal{A}_6$, and $f^\mathcal{A}_3=6 t^\mathcal{A}_4$. By Corollary \ref{simp t vek coroll}, we have $4 t^\mathcal{A}_4 + 6 t^\mathcal{A}_6 = f^\mathcal{A}_3 + n$. We obtain $t^\mathcal{A}_4=n, t^\mathcal{A}_6=\frac{n}{2}, f^\mathcal{A}_3= 6n$. Furthermore, every chamber has exactly three edges of weight two and three edges of weight three. Thus, by applying similar techniques as in the proof of Corollary \ref{f_3 h abschaetzung}, we obtain: $18n=3 f^\mathcal{A}_3 \geq (n-2)h^\mathcal{A}_2 $, $18n=3 f^\mathcal{A}_3 \geq 2 (n-3)h^\mathcal{A}_3 $. However, this implies $\frac{18n}{n-2}+\frac{27n}{n-3} \geq h^\mathcal{A}_2 + 3 h^\mathcal{A}_3=\frac{n^2-n}{2}$, showing that $n \leq 11$. As $\frac{n}{2}=t^\mathcal{A}_6 \in \mathbb{Z}$, we see that $n=11$ is impossible. It is easy to check that $n \leq 9$ is impossible as well. We conclude that $n=10$. In particular, we have $60=f^\mathcal{A}_3=\frac{1}{2} \sum_{H \in \mathcal{A}} |\mathcal{K}(\mathcal{A}^H)| \geq \frac{10 \cdot 12}{2}=60$. This is true because for every $H \in \mathcal{A}$, the restriction $\mathcal{A}^H$ is an irreducible arrangement in $\mathbb{P}^2$. The unique(!) irreducible simplicial arrangement in $\mathbb{P}^2$ with the smallest number of chambers is the arrangement $A(6,1)$, which has exactly $12$ chambers. We conclude that for every $H \in \mathcal{A}$, the restricted arrangement $\mathcal{A}^H$ is of type $A(6,1)$, showing that $\mathcal{A}$ must be of type $\mathcal{A}(A_4)$.     
\end{proof}
\end{section}

\begin{section}{An updated catalogue of simplicial arrangements in $\mathbb{P}^3(\mathbb{R})$}
As we mentioned in the introduction, there are catalogues published by Gr\"unbaum and Shephard, listing all isomorphism classes of (irreducible) simplicial arrangements in $\mathbb{P}^2$ and $\mathbb{P}^3$ that were known at the time (see \cite{Gruenbaum}, \cite{Gruenbaum_giessen}). In the paper \cite{Cuntz27}, a corrected version for the catalogue presented in \cite{Gruenbaum} was given. In this section, we will provide a corrected version for the catalogue presented in \cite{Gruenbaum_giessen}. However, in sharp contrast to \cite{Cuntz27}, we \textit{do not claim} that our catalogue contains all arrangements up to a certain size. It only reflects the author's current state of knowledge. All examples that are missing in \cite{Gruenbaum_giessen}, can be found in the paper \cite{Cuntz4space}. So our catalogue at least contains all \textit{crystallographic} examples (see \cite{Cuntz4space} for a definition). First, we recall that if $\mathcal{A}, \mathcal{A}^\prime$ are simplicial arrangements in $\mathbb{P}^{d}, \mathbb{P}^{d^\prime}$ respectively, then the product arrangement $\mathcal{A} \times \mathcal{A}^\prime$ is a simplicial arrangement in $\mathbb{P}^{d+d^\prime+1}$. Therefore, we will only list irreducible examples, i.e. examples of arrangements that cannot be obtained by the product construction described above. For each arrangement, we will list its $h$-vector, its $t$-vector and its $f$-vector. Moreover, some comments are provided, mainly regarding differing terminologies in \cite{Gruenbaum_giessen} and \cite{Cuntz4space}. We decide to adopt the notation from \cite{Gruenbaum_giessen}. The data shows that all arrangements are Gr\"unbaum-Shephard arrangements, in accordance with Conjecture \ref{h_2 conje} (see also Corollary \ref{h_2 conj crystallo}). Moreover, all appearing characteristic polynomials split over $\mathbb{R}$, as can be checked from the data via Theorem \ref{real roots rank 4} (or Corollary \ref{simp t vek coroll}). 
Finally, we remark that normal vectors for all crystallographic arrangements can be found in \cite{Cuntz4space} while the root vectors of reflection arrangements should be well known. For the two remaining arrangements of type $A^3_1(27),A^3_1(28)$, normal vectors are provided in the appendix. 

\begin{center}
    \begin{tabular}{|p{0.8cm} | p{2cm} |p{3.3cm} | p{2.8cm} | p{1.9cm} |}
    \hline
    \normalfont{label} & \normalfont{$h$-vector} & \normalfont{$t$-vector} & \normalfont{$f$-vector}& \normalfont{comments}  \\ \hline
 \begin{tiny} $A^3_1(10)$ \end{tiny}    &\begin{tiny}
 (15, 10)
\end{tiny}   &\begin{tiny}
(0, 10, 0, 5)
\end{tiny}  & \begin{tiny}
(15, 75, 120, 60)
\end{tiny} & \begin{tiny}
type $\mathcal{A}(A_4)$
\end{tiny} \\ \hline
  \begin{tiny}  $A^3_1(12)$ \end{tiny}&\begin{tiny}
  (18, 16)
\end{tiny}      & \begin{tiny}
(12, 0, 0, 12)
\end{tiny}  & \begin{tiny}
(24, 120, 192, 96)
\end{tiny} & \begin{tiny}
type $\mathcal{A}(D_4)$
\end{tiny} \\ \hline
  \begin{tiny}  $A^3_1(13)$ \end{tiny}& \begin{tiny}
  (21, 19)
\end{tiny}    &\begin{tiny}
(6, 10, 0, 9, 3)
\end{tiny}   &\begin{tiny}
(28, 148, 240, 120)
\end{tiny} &\begin{tiny}
subarrangement of $\mathcal{A}( B_4 )$
\end{tiny}\\ \hline 
 \begin{tiny}   $A^3_1(14)$ \end{tiny}&\begin{tiny}
 (25, 20, 1)
\end{tiny}   & \begin{tiny}
(2, 16, 2, 8, 2, 2)
\end{tiny} &\begin{tiny}
(32, 176, 288, 144)
\end{tiny} &\begin{tiny}
subarrangement of $\mathcal{A}( B_4 )$
\end{tiny}\\ \hline
  \begin{tiny}  $A^3_1(15)$ \end{tiny}&\begin{tiny}
  (30, 19, 3)
\end{tiny}    & \begin{tiny}
(0, 18, 6, 8, 0, 3, 1)
\end{tiny}  & \begin{tiny}
(36, 204, 336, 168)
\end{tiny} & \begin{tiny}
subarrangement of $\mathcal{A}( B_4 )$
\end{tiny}   \\ \hline    
\begin{tiny}	$A^3_2(15)$ \end{tiny}& \begin{tiny}
(27, 26)\end{tiny} &\begin{tiny}
(0, 24, 0, 6, 9)\end{tiny}   & \begin{tiny}
(39, 219, 360, 180)\end{tiny} & \begin{tiny} Nr. 1 in  \cite{Cuntz4space} \end{tiny}\\ \hline
\begin{tiny}	$A^3_1(16)$ \end{tiny}&\begin{tiny} (36, 16, 6)  \end{tiny}&\begin{tiny} (0, 16, 12, 8, 0, 0, 4) \end{tiny} &\begin{tiny} (40, 232, 384, 192) \end{tiny} &\begin{tiny}
type $\mathcal{A}(B_4)$
\end{tiny} \\ \hline
\begin{tiny}	$A^3_1(17)$ \end{tiny}&\begin{tiny} (34, 28, 3)  \end{tiny}&\begin{tiny} (12, 20, 0, 14, 0, 6, 1) \end{tiny} &\begin{tiny} (53, 293, 480, 240) \end{tiny} &\begin{tiny}
Nr. 2 in \cite{Cuntz4space}
\end{tiny} \\ \hline
\begin{tiny}	$A^3_1(18)$ \end{tiny}&\begin{tiny} (39, 32, 3)  \end{tiny}&\begin{tiny} (0, 36, 3, 8, 6, 6, 1) \end{tiny} &\begin{tiny} (60, 348, 576, 288) \end{tiny} &\begin{tiny}
Nr. 3 in \cite{Cuntz4space}
\end{tiny} \\ \hline
\begin{tiny}	$A^3_1(21)$ \end{tiny}&\begin{tiny} (51, 41, 6)  \end{tiny}&\begin{tiny} (12, 38, 6, 21, 3, 6, 0, 4) \end{tiny} &\begin{tiny} (90, 522, 864, 432) \end{tiny} &\begin{tiny}
missing in \cite{Gruenbaum_giessen}; Nr. 4 in \cite{Cuntz4space}
\end{tiny} \\ \hline
\begin{tiny}	$A^3_1(22)$ \end{tiny}&\begin{tiny} (57, 40, 9)  \end{tiny}&\begin{tiny} (12, 48, 6, 20, 0, 6, 4, 4) \end{tiny} &\begin{tiny} (100, 580, 960, 480) \end{tiny} &\begin{tiny}
missing in \cite{Gruenbaum_giessen}; Nr. 5 in \cite{Cuntz4space}
\end{tiny} \\ \hline
\begin{tiny}	$A^3_1(24)$ \end{tiny}&\begin{tiny} (72, 32, 18)  \end{tiny}&\begin{tiny} (0, 96, 0, 0, 0, 0, 24) \end{tiny} &\begin{tiny} (120, 696, 1152, 576) \end{tiny} &\begin{tiny}
type $\mathcal{A}(F_4)$; \newline Nr. 6 in \cite{Cuntz4space}
\end{tiny} \\ \hline
\begin{tiny}	$A^3_1(25)$ \end{tiny}&\begin{tiny} (75, 55, 10)  \end{tiny}&\begin{tiny} (0, 60, 30, 25, 15, 0, 0, 10) \end{tiny} &\begin{tiny} (140, 860, 1440, 720) \end{tiny} &\begin{tiny}
missing in \cite{Gruenbaum_giessen}; Nr. 7 in \cite{Cuntz4space}
\end{tiny} \\ \hline
    \begin{tiny}	$A^3_1(27)$ \end{tiny}&\begin{tiny} (81, 70, 0, 6)  \end{tiny}&\begin{tiny} (30, 60, 0, 67, 0, 0, 0, 12, 0, 0, 0, 0, 1) \end{tiny} &\begin{tiny} (170, 1010, 1680, 840) \end{tiny} &\begin{tiny}
subarrangement of $\mathcal{A}( H_4 )$
\end{tiny} \\ \hline
\begin{tiny}	$A^3_1(28)$ \end{tiny}&\begin{tiny} (90, 76, 0, 6)  \end{tiny}&\begin{tiny} (0, 100, 0, 58, 15, 0, 0, 12, 0, 0, 0, 0, 1) \end{tiny} &\begin{tiny} (186, 1146, 1920, 960) \end{tiny} &\begin{tiny}
subarrangement of $\mathcal{A}( H_4 )$
\end{tiny} \\ \hline
\end{tabular}
\end{center}
\begin{center}
    \begin{tabular}{|p{0.8cm} | p{2cm} |p{3.3cm} | p{2.8cm} | p{1.9cm} |}
    \hline
    \normalfont{label} & \normalfont{$h$-vector} & \normalfont{$t$-vector} & \normalfont{$f$-vector}& \normalfont{comments}  \\ \hline
\begin{tiny}	$A^3_2(28)$ \end{tiny}&\begin{tiny} (90, 64, 16)  \end{tiny}&\begin{tiny} (24, 84, 18, 40, 0, 18, 3, 0, 6, 0, 1) \end{tiny} &\begin{tiny} (194, 1154, 1920, 960) \end{tiny} &\begin{tiny}
Nr. 8 in \cite{Cuntz4space}
\end{tiny} \\ \hline
\begin{tiny}	$A^3_1(30)$ \end{tiny}&\begin{tiny} (99, 84, 9, 0, 2)  \end{tiny}&\begin{tiny} (0, 144, 0, 36, 24, 18, 0, 0, 0, 0, 6) \end{tiny} &\begin{tiny} (228, 1380, 2304, 1152) \end{tiny} &\begin{tiny}
Nr. 9 in \cite{Cuntz4space}
\end{tiny} \\ \hline
\begin{tiny}	$A^3_1(32)$ \end{tiny}&\begin{tiny} (120, 76, 18, 4)  \end{tiny}&\begin{tiny} (24, 120, 24, 68, 0, 6, 10, 8, 0, 0, 6) \end{tiny} &\begin{tiny} (266, 1610, 2688, 1344) \end{tiny} &\begin{tiny}
missing in \cite{Gruenbaum_giessen}; Nr. 10 in \cite{Cuntz4space}
\end{tiny} \\ \hline
\begin{tiny}	$A^3_2(32)$ \end{tiny}&\begin{tiny} (124, 64, 30)  \end{tiny}&\begin{tiny} (0, 144, 48, 40, 0, 0, 12, 16, 0, 0, 4) \end{tiny} &\begin{tiny} (264, 1608, 2688, 1344) \end{tiny} &\begin{tiny}
missing in \cite{Gruenbaum_giessen}; Nr. 11 in \cite{Cuntz4space}
\end{tiny} \\ \hline
\begin{tiny}	$A^3_1(60)$ \end{tiny}&\begin{tiny} (450, 200, 0, 72)  \end{tiny}&\begin{tiny} (0, 600, 0, 660, 0, 0, 0, 0, 0, 0, 0, 0, 60) \end{tiny} &\begin{tiny} (1320, 8520, 14400, 7200) \end{tiny} &\begin{tiny}
type $\mathcal{A}(H_4)$
\end{tiny} \\ \hline
    \end{tabular}
    \captionof{table}{The list of currently known irreducible simplicial hyperplane arrangements in $\mathbb{P}^3(\mathbb{R})$.}
\end{center}

\begin{remark}
As noted above, the given table shows that all \textit{known} simplicial arrangements in $\mathbb{P}^3$ have a characteristic polynomial which has only real roots.
We observe that for simplicial arrangements in $\mathbb{P}^2$, this is not the case: several counterexamples can be found for instance in the paper \cite{Cuntz27}. In said paper, the smallest counterexample is denoted by $A(13,4)$; this arrangement also arises as extremizer for the so called Dirac-Motzkin Conjecture: the statement that an arrangement of $n$ lines in $\mathbb{P}^2$ determines at least $\left \lfloor \frac{n}{2} \right \rfloor$ vertices of weight two. In the paper \cite{greentao}, this conjecture was proved to be a theorem, at least for sufficiently large arrangements. Moreover, in the paper \cite{free_simp_rang3}, we prove that the conjecture holds for \textit{any} arrangement in $\mathbb{P}^2$ whose characteristic polynomial splits over $\mathbb{R}$. In fact, the statement is not only proved for straight line arrangements, but also for arrangements of \textit{pseudolines} with splitting polynomial.
\end{remark}

The computed data combined with the results obtained in the paper \cite{Cuntz4space} immediately shows that Conjecture \ref{h_2 conje} is a theorem for crystallographic arrangements: 

\begin{corollary} \label{h_2 conj crystallo}
Every crystallographic arrangement in $\mathbb{P}^3$ is a Gr\"unbaum-Shephard arrangement.
\end{corollary}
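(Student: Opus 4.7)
The proof strategy is a direct appeal to the classification of crystallographic simplicial arrangements in $\mathbb{P}^3$ from \cite{Cuntz4space}. That classification asserts that every \emph{irreducible} crystallographic simplicial arrangement in $\mathbb{P}^3$ is isomorphic to one of the arrangements in Table 1. Consequently, for the irreducible case, I would simply read the Grünbaum--Shephard inequality $h_2^{\mathcal{A}} > \sum_{i \geq 3} h_i^{\mathcal{A}}$ off the $h$-vector column of the table, checking it entry by entry. This is a routine finite verification. Inspection shows that the inequality is satisfied in every row, the tightest case being $A^3_2(15)$, where $h_2^{\mathcal{A}} = 27$ and $h_3^{\mathcal{A}} = 26$.

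For the reducible case, write $\mathcal{A} = \mathcal{A}_1 \times \mathcal{A}_2$ with each factor again crystallographic and simplicial. Dimension considerations restrict the pair of factor dimensions to $(0,2), (1,1), (2,0)$. The key observation is that in the product, every ``mixed'' line of the form $H_1 \cap H_2$ with $H_i \in \mathcal{A}_i$ has weight exactly two in $\mathcal{A}$, while lines of weight at least three in $\mathcal{A}$ come solely from lines of the same weight in one of the factors. Combined with the Grünbaum--Shephard inequality for crystallographic arrangements in $\mathbb{P}^2$ (known by \cite{Cuntz27} via direct inspection of the analogous catalogue), the inequality passes to $\mathcal{A}$, since the mixed contribution only inflates $h_2^{\mathcal{A}}$ without adding to $\sum_{i \geq 3} h_i^{\mathcal{A}}$.

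The principal work lies in the finite row-by-row verification against Table 1, which is immediate, and in the short bookkeeping argument for the reducible case. No genuine conceptual obstacle arises beyond the classification results already cited, which is precisely why this result is a corollary rather than a theorem.
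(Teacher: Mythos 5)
Your core argument is exactly the paper's: the corollary is obtained by reading the $h$-vectors off the complete classification of crystallographic arrangements from \cite{Cuntz4space} (reproduced in Table 1) and checking $h^\mathcal{A}_2 > \sum_{i\geq 3} h^\mathcal{A}_i$ row by row, with $A^3_2(15)$ indeed the tightest case. The one substantive difference is your added treatment of reducible arrangements, which the paper silently ignores; there your justification has a flaw. The ``analogous'' Gr\"unbaum--Shephard inequality in $\mathbb{P}^2$ would concern the $t$-vector (points are the codimension-two strata there), and the statement $t_2 > \sum_{i\geq 3} t_i$ is \emph{false} for crystallographic line arrangements: already $A(6,1)$, the projectivized $A_3$ reflection arrangement, has $t$-vector $(3,4)$. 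So you cannot argue ``planar inequality plus mixed lines''; instead you must let the mixed lines do all the work. That rescue is easy: in $\mathcal{A}_1\times\mathcal{A}_2$ every mixed line $H_1\cap H_2$ has weight exactly two (as you correctly observe), contributing $n_1 n_2$ to $h^\mathcal{A}_2$, while the weight-$\geq 3$ lines of $\mathcal{A}$ are exactly the weight-$\geq 3$ vertices of the $\mathbb{P}^2$-factor (plus possibly the two ``centers'' in the $(1,1)$ case); one then checks directly that $n_2 + t_2^{\mathcal{A}_2} > \sum_{i\geq 3} t_i^{\mathcal{A}_2}$ for the finitely many crystallographic line arrangements, which is a different (and true) inequality from the one you invoked. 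With that repair your argument is complete and slightly more thorough than the paper's one-line proof.
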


\end{section}

\begin{section}{Appendix}
The purpose of this short appendix is to provide normal vectors for the arrangements of type $A^3_1(27), A^3_1(28)$ respectively. We write $\tau:=\frac{1+\sqrt{5}}{2}$. Then the arrangement $A^3_1(28)$ can be defined by the following normal vectors: \\\begin{footnotesize}
$\lbrace \left( 1, 0, 0, 0 \right), \left( 0, 1, 0, 0 \right),\left( 0, 1, 1, 0 \right), \left( 0, 0, 1, 0 \right), \left( 0, 1, 1, 1 \right), \left( 0, 0, 1, 1 \right),\\ \left( 0, 0, 0, 1 \right), \left( 1, 1, 0, 0 \right), \left( 1, 1, 1, 1 \right), \left( 1, \tau, 0, 0 \right) 
  , \left( \tau, 1, 0, 0 \right), 
  \left( 1, \tau, \tau, \tau \right),  \\
  \left( \tau, 1, 1, 0 \right),  
  \left( \tau, 1+\tau, 1, 0 \right), 
  \left( \tau, 1, 1, 1 \right), 
  \left( \tau, 1+\tau, \tau, 
      \tau \right), 
  \left( 1+\tau, 1+\tau, 1, 0 \right),  \\
  \left( \tau, 1+\tau, 1, 1 \right), 
  \left( 1+\tau, 1+\tau, 
      \tau, \tau \right), 
  \left( \tau, 1+\tau, \\
      1+\tau, \tau \right),  
  \left( 1+\tau, 1+\tau, 1, 1 \right), \\
  \left( 1+\tau, 2 \tau, \tau, 
      \tau \right),  
  \left( 1+\tau, 2 \tau, \tau, 1 \right), 
  \left( \tau, \tau, \tau, 1 \right), 
  \left( \tau, 1+\tau, \tau, 1 \right),  
  \left( 1+\tau, 1+\tau, \\
      \tau, 1 \right), \\
  \left( \tau, 2, 3-\tau, 1 \right), 
  \left( 2+3\tau, 2+4\tau , 1+3\tau
  , 1+\tau \right)\rbrace $.
\end{footnotesize}

The arrangement $A^3_1(27)$ is obtained by leaving out the last element in the above listing. We note that both arrangements have $\mathcal{A}(H_3)$ as parabolic subarrangement. Thus, their minimal field of definition is $\mathbb{Q}(\tau)=\mathbb{Q}(\sqrt{5})$ (extending the terminology introduced in \cite{p-C10b} to arrangements in $\mathbb{P}^3$).
\end{section}

\def\cprime{$'$}
\providecommand{\bysame}{\leavevmode\hbox to3em{\hrulefill}\thinspace}
\providecommand{\MR}{\relax\ifhmode\unskip\space\fi MR }
\providecommand{\MRhref}[2]{%
  \href{http://www.ams.org/mathscinet-getitem?mr=#1}{#2}
}
\providecommand{\href}[2]{#2}

\end{document}